\def \To{\longrightarrow}
\def \gr{\operatorname{gr}}
\def \Corep{\operatorname{Corep}}
\def \unit{\epsilon}
\def \C{\mathcal{C}}
\def \D{\Delta}
\def \d{\delta}
\def \dl{\d_{_L}}
\def \dr{\d_{_R}}
\def \e{\varepsilon}
\def \N{\mathbb{N}}
\def \R{\mathcal{R}}
\def \S{\mathcal{S}}
\def \Z{\mathbb{Z}}
\numberwithin{equation}{section}
\newtheorem{theorem}{Theorem}[section]
\newtheorem{lemma}[theorem]{Lemma}
\newtheorem{proposition}[theorem]{Proposition}
\newtheorem{corollary}[theorem]{Corollary}
\newtheorem{remarks}[theorem]{Remarks}
\newtheorem{convention}[theorem]{Convention}
\begin{document}
\title[COQUASITRIANGULAR POINTED MAJID ALGEBRAS]
{ON COQUASITRIANGULAR POINTED \\ MAJID ALGEBRAS}
\author{Hua-Lin Huang}
\address{School of Mathematics, Shandong University, Jinan 250100, China} \email{hualin@sdu.edu.cn}
\author{Gongxiang Liu}
\address{Department of Mathematics, Nanjing University, Nanjing 210093, China} \email{gxliu@nju.edu.cn}
\date{}
\date{}
\maketitle

\begin{abstract}
We study coquasitriangular pointed Majid algebras via the quiver
approaches. The class of Hopf quivers whose path coalgebras admit
coquasitriangular Majid algebras is classified. The quiver setting
for general coquasitriangular pointed Majid algebras is also
provided. Through this, some examples and classification results are
obtained.
\end{abstract}

\section{Introduction}

Quasitriangular quasi-Hopf algebras were introduced and profoundly
studied by Drinfeld in a series of papers \cite{d2,d3,d4}. These are
a natural generalization of quasitriangular Hopf algebras which play
an essential role in his theory of quantum groups \cite{d1}. They
turn out to have deep connections with tensor categories, conformal
field theory, knot invariants, Grothendieck-Teichm\"{u}ller group,
multiple zeta value, and so on.

As far as we know, there are not many examples of quasitriangular
quasi-Hopf algebras in literature other than the quasi-triangular
quasi-Hopf QUE-algebras of Drinfeld \cite{d2} and the twisted
quantum double of finite groups of Dijgraaf-Pasquier-Roche
\cite{dpr} as well as their various generalizations. In particular,
the fundamental classification problem is still widely open. It is
our expectation that there will be a nice theory for the
classification problem of some interesting classes of
quasitriangular quasi-Hopf algebras and the associated braided
tensor categories, for instance an extension of the classification
theory of finite-dimensional triangular Hopf algebras over the field
of complex numbers due to Etingof and Gelaki (see \cite{eg} and
references therein) into the quasitriangular quasi-Hopf setting.

This paper is devoted to the study of quasitriangular quasi-Hopf
algebras via the quiver approaches initiated in \cite{qha1,qha2}. As
before we will work on a dual setting, namely the so-called
coquasitriangular Majid algebras, since this allows a wider scope
and the convenience of exposition. A recent work of the authors
\cite{hsaq5} shows that the coquasitriangularity of pointed Hopf
algebras can be described by combinatorial property of Hopf quivers.
Moreover, the quiver setting helps to give a complete classification
of finite-dimensional coquasitriangular pointed Hopf algebras over
an algebraically closed field of characteristic 0. The basic aim of
the present paper is to extend the study to the quasi situation.

We start by showing that the path coalgebra $kQ$ of a quiver $Q$
admits a coquasitriangular Majid algebra structure if and only if
$Q$ is a Hopf quiver of the form $Q(G,R)$ with $G$ abelian. Next we
give a classification of the set of graded coquasitriangular Majid
structures on a given connected Hopf quiver of this form. Then we
show that, for a general coquasitriangular pointed Majid algebra,
its graded version induced by coradical filtration can be viewed as
a large sub structure of a graded coquasitriangular Majid structure
on some unique Hopf quiver defined in the previous step. So far a
quiver setting for the class of coquasitriangular pointed Majid
algebras is built up. Finally we use the quiver setting to provide
some examples and classification results.

Throughout the paper, we work over a field $k.$ Vector spaces,
algebras, coalgebras, linear mappings, and unadorned $\otimes$ are
over $k.$ The readers are referred to \cite{d2,majid} for general
knowledge of quasi-Hopf and Majid algebras, and to \cite{ass} for
that of quivers and their applications to associative algebras and
representation theory. We turn to \cite{qha1,qha2} frequently for
definitions, notations and results of the quiver setting of Majid
algebras.

\section{Majid Algebras and Their Quiver Setting}

In this section we recall the definition of coquasitriangular Majid
algebras and the quiver framework of pointed Majid algebras for the
convenience of the readers.

\subsection{Coquasitriangular Majid Algebras}
A Majid algebra $H$ with associator $\Phi$ is said to be
coquasitriangular, if there is a convolution-invertible map $\R: H
\otimes H \To k$ such that
\begin{gather}
\R(ab , c)=\Phi(c_1 , b_1 , a_1)\R(a_2 , c_2)\Phi^{-1}(a_3 , c_3 ,
b_2) \\ \times \R(b_3 ,
c_4)\Phi(a_4 , b_4 , c_5), \nonumber \\
\R(a , bc)=\Phi^{-1}(b_1 , c_1 , a_1)\R(a_2 , c_2) \Phi(b_2 , a_3 ,
c_3) \\ \times \R(a_4
, b_3)\Phi^{-1}(a_5 , b_4 , c_4), \nonumber \\
b_1a_1\R(a_2 , b_2)=\R(a_1 , b_1)a_2b_2
\end{gather} for all $a,b,c \in H.$ Here and below we use the Sweedler sigma notation $\D(a)=a_1 \otimes
a_2$ for the coproduct and $a_1 \otimes a_2 \otimes \cdots \otimes
a_{n+1}$ for the result of the $n$-iterated application of $\D$ on
$a.$ The map $\R$ is called a coquasitriangular structure of $H.$ A
coquasitriangular Majid algebra $(H,\Phi,\R)$ is called cotriangular
if \begin{equation} \R(a , b)\R(b , a) = \e(a)\e(b)
\end{equation} for all $a,b \in H.$

\subsection{Hopf Quivers}
A quiver is a quadruple $Q=(Q_0,Q_1,s,t),$ where $Q_0$ is the set of
vertices, $Q_1$ is the set of arrows, and $s,t:\ Q_1 \longrightarrow
Q_0$ are two maps assigning respectively the source and the target
for each arrow. A path of length $l \ge 1$ in the quiver $Q$ is a
finitely ordered sequence of $l$ arrows $a_l \cdots a_1$ such that
$s(a_{i+1})=t(a_i)$ for $1 \le i \le l-1.$ By convention a vertex is
said to be a trivial path of length $0.$ Let $Q_n$ denote the set of
paths of length $n$ in $Q.$ There is a natural path coalgebra
structure on the path space $kQ$ with coproduct defined by splitting
of paths.

According to \cite{cr2}, a quiver $Q$ is said to be a Hopf quiver if
the corresponding path coalgebra $kQ$ admits a graded Hopf algebra
structure. Hopf quivers can be determined by ramification data of
groups. Let $G$ be a group and denote its set of conjugacy classes
by $\C.$ A ramification datum $R$ of the group $G$ is a formal sum
$\sum_{C \in \C}R_CC$ of conjugacy classes with coefficients in
$\mathbb{N}=\{0,1,2,\cdots\}.$ The corresponding Hopf quiver
$Q=Q(G,R)$ is defined as follows: the set of vertices $Q_0$ is $G,$
and for each $x \in G$ and $c \in C,$ there are $R_C$ arrows going
from $x$ to $cx.$

\subsection{Quiver Setting for Majid Algebras}

It is shown in \cite{qha1} that the path coalgebra $kQ$ admits a
graded Majid algebra structure if and only if the quiver $Q$ is a
Hopf quiver. Moreover, given a Hopf quiver $Q=Q(G,R),$ the set of
graded Majid algebra structures on $kQ$ with $kQ_0=(kG,\Phi)$ as
Majid algebras is in one-to-one correspondence with the set of
$(kG,\Phi)$-Majid bimodule structures on $kQ_1.$ \emph{In this paper
we always ignore the difference of various quasi-antipodes of a
Majid algebra, since they are essentially equivalent according to
Drinfeld \cite{d2} (Proposition 1.1).}

Recall that if $M$ is a $(kG, \Phi)$-Majid bimodule, then the
underlying bicomodule structure makes it a $G$-bigraded space
$M=\bigoplus_{g,h \in G} \ ^gM^h$ with $(g,h)$-isotypic component
$^gM^h=\{ m \in M | \d_{_L}(m)=g \otimes m, \ \d_{_R}(m)=m \otimes h
\} \ .$ While the quasi-bimodule structure maps satisfy the
following equalities:
\begin{gather}
e(fm)=\frac{\Phi(e,f,g)}{\Phi(e,f,h)}(ef)m,\\
(me)f=\frac{\Phi(h,e,f)}{\Phi(g,e,f)}m(ef),\\
(em)f=\frac{\Phi(e,h,f)}{\Phi(e,g,f)}e(mf),
\end{gather}
for all $e,f,g,h \in G$ and $m \in \ ^gM^h.$

A Majid algebra is said to be pointed, if its underlying coalgebra
is pointed. Given a pointed Majid algebra $(H, \Phi),$ let
$\{H_n\}_{n \ge 0}$ be its coradical filtration. Then the
corresponding coradically graded coalgebra $\gr(H) = H_0 \oplus
H_1/H_0 \oplus H_2/H_1 \oplus \cdots$ has an induced graded Majid
algebra structure with graded associator $\gr(\Phi)$ satisfing
$\gr(\Phi)(\bar{a},\bar{b},\bar{c})=0$ for all homogeneous
$\bar{a},\bar{b},\bar{c} \in \gr(H)$ unless they all lie in $H_0.$
In particular, $H_0$ is a sub Majid algebra and turns out to be the
group algebra $kG$ of the group $G=G(H),$ the set of group-like
elements of $H.$ In addition, the restriction of $\Phi$ to the
coradical of $H$ is a 3-cocycle on $G.$

For a coquasitriangular pointed Majid algebra $(H,\Phi,\R),$ let
$(\gr(H), \gr(\Phi))$ be as above. Define the function $\gr(\R):
\gr(H) \otimes \gr(H) \To k,$ for all homogeneous elements $g,h\in
\gr(H),$ by \[ \gr(\R)(g,h) = \left\{
                                        \begin{array}{ll}
                                          \R(g,h), & \hbox{if $g,h \in H_0$;} \\
                                          0, & \hbox{otherwise.}
                                        \end{array}
                                      \right. \]
Then we have the following easy fact which is useful later on.

\begin{lemma}
The coradically graded version $(\gr(H), \gr(\Phi), \gr(\R))$ is
still a coquasitriangular Majid algebra.
\end{lemma}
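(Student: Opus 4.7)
The strategy is to reduce all required identities to the coquasitriangular axioms that $\R$ already satisfies on the sub Majid algebra $H_0 = kG$, exploiting the grading of $\gr(H)$ together with the fact that $\gr(\Phi)$ and $\gr(\R)$ are both supported in total degree zero. The engine of the argument is the observation that, since $H_0 = kG$ is a sub Majid algebra of $(H,\Phi)$, the restriction $\R|_{H_0 \otimes H_0}$ is itself a coquasitriangular structure on $(H_0, \Phi|_{H_0})$; in particular it is convolution-invertible, with inverse $\R^{-1}|_{H_0 \otimes H_0}$.

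Building on this, I would first define $\gr(\R)^{-1}$ on $\gr(H) \otimes \gr(H)$ by the same recipe as $\gr(\R)$ (namely $\R^{-1}$ on the degree-zero component and $0$ off it) and verify that $\gr(\R) * \gr(\R)^{-1} = \e \otimes \e = \gr(\R)^{-1} * \gr(\R)$ by a short degree argument: in each convolution sum, a surviving term requires both factors to land in $H_0$, which by the grading of the coproduct forces the whole element to lie in $H_0$, where the identity is just the one on $kG$. Then I would check axioms (2.1)--(2.3) on homogeneous $a, b, c \in \gr(H)$ by splitting into two cases. If $a, b, c \in H_0$ the three identities are precisely the corresponding identities for $(H_0, \Phi|_{H_0}, \R|_{H_0 \otimes H_0})$, which hold by the observation above. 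If at least one of $a, b, c$ has positive degree, both sides must vanish. On the left of (2.1), $\gr(\R)(ab, c) = 0$ unless $ab, c \in H_0$, and since $H_0$ is the degree-zero component of the graded algebra $\gr(H)$, this forces $a, b, c \in H_0$. On the right, each factor $\gr(\Phi)$ vanishes unless its three arguments are in $H_0$, and each $\gr(\R)$ factor vanishes unless its two arguments are; collecting the resulting constraints on the Sweedler components of $a, b, c$ and using that the degrees of $a_1, \ldots, a_{n+1}$ sum to $\deg a$, one finds again that every nontrivial term requires $a, b, c \in H_0$. Axioms (2.2) and (2.3) are handled identically.

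The proof is essentially formal; the one point needing genuine care is the degree bookkeeping in the vanishing case, namely verifying that for each axiom the supports of $\gr(\Phi)$ and $\gr(\R)$ collectively pin down every relevant Sweedler factor to degree zero. Once this is done, everything collapses to the coradical and nothing further remains to check.
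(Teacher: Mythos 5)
Your reduction works for the convolution-invertibility of $\gr(\R)$ and for axioms (2.1)--(2.2), because those are \emph{scalar-valued} identities: if some argument has positive degree, then on one side $\gr(\R)(ab,c)$ (resp.\ $\gr(\R)(a,bc)$) vanishes since $ab$ has degree $\deg a+\deg b$ in the graded algebra, and on the other side every summand contains a factor $\gr(\Phi)$ or $\gr(\R)$ whose arguments cannot all be pinned to degree $0$ unless $a,b,c\in H_0$. The paper itself offers no proof (it calls the lemma an ``easy fact''), so up to this point your argument is a reasonable filling-in.

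However, the sentence ``Axioms (2.2) and (2.3) are handled identically'' hides a genuine gap. Axiom (2.3), $b_1a_1\R(a_2,b_2)=\R(a_1,b_1)a_2b_2$, is an identity between \emph{elements of} $\gr(H)$, not scalars, and both sides are in general nonzero when $a$ or $b$ has positive degree. For instance, with $a$ homogeneous of degree $m>0$ and $b=g$ group-like, the left side reduces (using that $\gr(\R)(a_2,g)\ne 0$ forces $a_2\in H_0$) to $g\,a^{(0)}\,\R(a^{(1)},g)$, where $a^{(0)}\otimes a^{(1)}$ is the right $kG$-comodule component of $\D(a)$; this is a nonzero element of degree $m$, and likewise for the right side. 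So the ``both sides vanish'' dichotomy fails, and your case analysis does not establish (2.3). The correct argument here is different in kind: lift $a,b$ to $H$, apply the original axiom (2.3) for $(H,\Phi,\R)$, and project to the top graded piece $H_{m+n}/H_{m+n-1}$; since $\D(H_m)\subseteq\sum_{i+j=m}H_i\otimes H_j$ and products of lower filtration pieces land in lower filtration, only the terms with $a_2,b_2\in H_0$ (where $\R$ agrees with $\gr(\R)$) survive the projection, yielding the graded identity. With that replacement for (2.3), the proof is complete.
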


Thanks to the Gabriel type theorem in \cite{qha1} (Theorem 3.4), for
an arbitrary pointed Majid algebra $H,$ its graded version $\gr(H)$
can be realized as a large sub Majid algebra of some graded Majid
algebra structure on a unique Hopf quiver. By ``large" it is meant
the sub Majid algebra contains the set of vertices and arrows of the
Hopf quiver.

\subsection{Multiplication Formula for Quiver Majid Algebras}
It is shown in \cite{qha1} that the path multiplication formula of
graded Majid algebras on Hopf quivers can be given via quantum
shuffle product as in \cite{cr2}.

Suppose that $Q$ is a Hopf quiver with a necessary $kQ_0$-Majid
bimodule structure on $kQ_1.$ Let $p \in Q_l$ be a path. An $n$-thin
split of it is a sequence $(p_1, \ \cdots, \ p_n)$ of vertices and
arrows such that the concatenation $p_n \cdots p_1$ is exactly $p.$
These $n$-thin splits are in one-to-one correspondence with the
$n$-sequences of $(n-l)$ 0's and $l$ 1's. Denote the set of such
sequences by $D_l^n.$ Clearly $|D_l^n|={n \choose l}.$ For $d=(d_1,
\ \cdots, \ d_n) \in D_l^n,$ the corresponding $n$-thin split is
written as $dp=((dp)_1, \ \cdots, \ (dp)_n),$ in which $(dp)_i$ is a
vertex if $d_i=0$ and an arrow if $d_i=1.$ Let $\alpha=a_m \cdots
a_1$ and $\beta=b_n \cdots b_1$ be paths of length $m$ and $n$
respectively. Let $d \in D_m^{m+n}$ and $\bar{d} \in D_n^{m+n}$ the
complement sequence which is obtained from $d$ by replacing each 0
by 1 and each 1 by 0. Define an element
$$(\alpha  \beta)_d=[(d\alpha)_{m+n} (\bar{d}\beta)_{m+n}] \cdots
[(d\alpha)_1 (\bar{d}\beta)_1]$$ in $kQ_{m+n},$ where $[(d\alpha)_i
(\bar{d}\beta)_i]$ is understood as the action of $kQ_0$-Majid
bimodule on $kQ_1$ and these terms in different brackets are put
together by cotensor product, or equivalently concatenation. In
terms of these notations, the formula of the product of $\alpha$ and
$\beta$ is given as follows:
\begin{equation}
\alpha  \beta=\sum_{d \in D_m^{m+n}}(\alpha  \beta)_d \ .
\end{equation}

\section{Coquasitriangular Majid Algebras on Quivers}

In this section, we determine those quivers whose path coalgebras
admit coquasitriangular Majid algebra structures. A classification
of the set of graded coquasitriangular structures on such quivers is
also obtained.

\subsection{}
Our first step is to determine the condition on a quiver $Q$ such
that its path coalgebra $kQ$ admits a coquasitriangular Majid
algebra structure.

\begin{proposition}
Let $Q$ be a quiver. Then $kQ$ admits a coquasitriangular Majid
algebra structure if and only if $Q$ is a Hopf quiver of form
$Q(G,R)$ where $G$ is an abelian group and $R$ a ramification datum.
\end{proposition}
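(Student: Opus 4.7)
The plan is to prove the two implications separately, with the ``only if'' direction being the substantive one.

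For the ``only if'' direction, suppose $kQ$ carries a coquasitriangular Majid algebra structure $(\Phi,\R)$. The mere fact that $kQ$ is a Majid algebra already forces $Q$ to be a Hopf quiver: passing to the coradical filtration yields a graded Majid algebra structure on $kQ\cong\gr(kQ)$, to which the Hopf quiver characterisation recalled in Section~2.3 applies. Thus $Q=Q(G,R)$ for some group $G$ and ramification datum $R$, and the remaining task is to prove that $G$ is abelian. For this I would restrict everything to the coradical $kQ_0=kG$, which is a sub Majid algebra carrying the restricted associator $\Phi|_{kG^{\otimes 3}}$ and the restricted coquasitriangular form $\R|_{kG\otimes kG}$. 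Because $\D(g)=g\otimes g$ for every $g\in G$, convolution on $\Hom(kG\otimes kG,k)$ specialises to pointwise multiplication, so convolution invertibility of $\R|_{kG\otimes kG}$ is equivalent to pointwise non-vanishing: $\R(g,h)\neq 0$ for all $g,h\in G$. Specialising axiom~(2.3) to $a=g$, $b=h\in G$ then yields
\[ hg\,\R(g,h) \;=\; \R(g,h)\,gh, \]
and cancelling the nonzero scalar gives $gh=hg$, so $G$ is abelian.

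For the ``if'' direction, assume $G$ is abelian and $Q=Q(G,R)$. I would exhibit a concrete coquasitriangular Majid algebra structure on $kQ$ by taking the trivial associator $\Phi=1$, choosing the $kG$-Hopf bimodule on $kQ_1$ in which left and right $G$-actions coincide on each arrow (label the arrows from $x$ to $cx$ as $\alpha^c_x$ and declare $g\cdot\alpha^c_x=\alpha^c_{gx}=\alpha^c_x\cdot g$, which is consistent because $gx=xg$), and setting $\R=\e\otimes\e$. The quantum shuffle formula of Section~2.4 then produces the classical symmetric shuffle, making $kQ$ a commutative graded Hopf algebra. On such data, axioms~(2.1)--(2.2) collapse via counit cancellation, and axiom~(2.3) reduces to the identity $ab=ba$, which holds by commutativity.

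The main obstacle is pinpointing the right restriction argument in the ``only if'' direction. Once one observes that convolution invertibility of $\R$ on the group-like coradical $kG$ is the same as pointwise non-vanishing on $G\times G$, the coquasi-commutativity axiom does the remaining work, with no delicate interplay between $\Phi$ and $\R$ needed; in particular the associator plays no role in forcing $G$ abelian. The ``if'' direction is then essentially formal, since for abelian $G$ one can trivialise the braiding of the quantum shuffle by making left and right translations agree on arrows.
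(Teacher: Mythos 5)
Your proposal is correct and follows essentially the same route as the paper: for necessity, restrict to the group-like coradical where convolution invertibility of $\R$ forces $\R(g,h)\neq 0$ and axiom (2.3) yields $gh=hg$; for sufficiency, take the trivial associator, the trivial $kG$-bimodule structure on arrows, and $\R=\e\otimes\e$, producing a commutative graded Hopf algebra. The only cosmetic difference is that the paper first invokes its Lemma 2.1 to pass to the coradically graded version before restricting to degree $0$, whereas you restrict the (possibly non-graded) structure directly to the coradical; both are valid and rest on the same facts.
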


\begin{proof}
Assume that $Q$ is a quiver such that $kQ$ admits a
coquasitriangular Majid structure. By Lemma 2.1, we can assume that
the coquasitriangular Majid algebra is graded, namely both the
associator $\Phi$ and the coquasitriangular structure $\R$
concentrate at degree 0. Then by \cite{qha1} (Theorem 3.1), in the
first place $Q$ must be a Hopf quiver, say $Q(G,R).$ Note that
$kQ_0=kG$ is a group algebra and that $(kG,\Phi,\R)$ is a
coquasitriangular Majid algebra. Here $\Phi$ and $\R$ are understood
as their restriction to the degree 0 part. Now by (2.3) we have
\[ hg \R(g,h) = \R(g,h) gh \] for all $g,h \in G.$ Since $\R$ is convolution-invertible, one always has
$\R(g,h) \ne 0$ and then $gh=hg.$ This proves that $G$ is an abelian
group.

Conversely, assume that $Q$ is the Hopf quiver $Q(G,R)$ of some
abelian group $G$ with respect to a ramification datum $R.$ Then we
can take the trivial 3-cocycle $\Phi$ on $G,$ that is,
$\Phi(f,g,h)=1$ for all $f,g,h \in G,$ and then the
$(kG,\Phi)$-Majid bimodule structure on $kQ_1$ which corresponds to
the product of a set of trivial $kG$-modules. For more detail, see
\cite{qha2} (Theorem 3.3). That implies, for all $g \in G$ and
$\alpha \in Q_1,$ we have $g \alpha=\alpha g.$ By the product
formula given in Subsection 2.4, this gives rise to a commutative
graded Majid structure on $kQ.$ In fact this is even a commutative
Hopf algebra as the 3-cocycle $\Phi$ is trivial. Apparently $(kQ,
\Phi, \e \otimes \e)$ is a coquasitriangular Majid algebra.
\end{proof}

\subsection{}
Next we turn to classify the set of graded coquasitriangular Majid
algebra structures on a Hopf quiver of the form $Q(G,R)$ with $G$
abelian and $R=\sum_{g \in G} R_gg.$ By the Cartier-Gabriel
decomposition theorem for pointed Majid algebras \cite{qha1}
(Theorem 4.1), every graded Majid algebra on a general Hopf quiver
can be written as the crossed product of the sub structure on its
connected component containing the identity and a group algebra
possibly twisted by a 3-cocycle. Therefore in the following we can
assume without loss of generality that the quiver $Q(G,R)$ is
connected. By definition, it is clear that the Hopf quiver $Q(G,R)$
is connected if and only if the set $\{g \in G | R_g \ne 0\}$
generates the group $G.$

\begin{theorem}
Let $Q=Q(G,R)$ be a connected Hopf quiver with $G$ abelian and
$R=\sum_{g \in G} R_gg.$ Then the set of graded coquasitriangular
Majid algebra structures on $kQ$ with associator and
coquasitriangular structure concentrating at degree 0 and $Q_0 \cong
G$ as groups is in one-to-one correspondence with the set of pairs
$(\Phi,\R)$ in which $\Phi:G \times G \times G \To k$ is a 3-cocycle
such that
\begin{gather}
\frac{\Phi(eg,f,t)\Phi(g,e,t)\Phi(e,t,f)}{\Phi(eg,t,f)\Phi(g,t,e)\Phi(e,f,t)}
=\frac{\Phi(gt,e,f)\Phi(g,ef,t)\Phi(t,e,f)}{\Phi(g,e,f)\Phi(g,t,ef)},\\
\frac{\Phi(e,g,t)\Phi(eg,f,t)}{\Phi(f,g,t)\Phi(eg,t,f)}
=\frac{\Phi(e,gt,f)\Phi(e,fg,t)\Phi(g,f,t)}{\Phi(e,g,f)\Phi(g,t,f)\Phi(f,g,t)}
\end{gather}
for all $e,f,g \in G$ and $t \in G$ with $R_t \ne 0,$ and $\R: G
\times G \To k$ is a map such that
\begin{gather}
\R(f , gh)=\R(f , g)\R(f , h)\frac{\Phi(g,f,h)}{\Phi(g,h,f)\Phi(f,g,h)}, \\
\R(fg, h)=\R(f , h)\R(g ,
h)\frac{\Phi(h,f,g)\Phi(f,g,h)}{\Phi(f,h,g)}, \\
\R(g,h)\R(h,g)=1
\end{gather}
for all $f,g,h \in G.$
\end{theorem}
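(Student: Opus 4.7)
The plan is to set up a two-way correspondence: the forward direction extracts $(\Phi, \R)$ from a given graded coquasitriangular Majid structure on $kQ$ by restriction to $kQ_0 = kG$, and the backward direction rebuilds the full structure (the 3-cocycle, the $(kG, \Phi)$-Majid bimodule on $kQ_1$, and the scalar $\R$) from the pair $(\Phi, \R)$.

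For the forward direction, the restriction of $\Phi$ to $kG^{\otimes 3}$ is automatically a 3-cocycle since $kG$ is a sub Majid algebra of $kQ$. Applying axioms (2.1), (2.2) to group-element triples yields (3.4), (3.3) directly: the iterated coproducts of group-likes are trivial and $G$ is abelian, so these unpack into the stated identities. For (3.5), I would apply (2.3) to the mixed pairs $(g, \alpha)$ and $(\alpha, g)$, where $\alpha : x \to tx$ is an arrow with $R_t \ne 0$ and $g \in G$. Since $\D(\alpha) = tx \otimes \alpha + \alpha \otimes x$ and $\R$ vanishes when either argument has positive degree, each side of (2.3) collapses to a single nontrivial term, producing the scalar identification $\alpha g = (\R(g, tx)/\R(g, x))\, g\alpha$ from the first pair and $\alpha g = (\R(x, g)/\R(tx, g))\, g\alpha$ from the second. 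Equating these gives $\R(g, tx)\R(tx, g) = \R(g, x)\R(x, g)$ for all $g, x \in G$ and $t$ with $R_t \ne 0$; iterating along a generating set of $G$ (available by connectedness of $Q$) shows this quantity is constant in $x$ and so equal to $\R(g, e)\R(e, g) = 1$, using the normalization of $\R$ that follows from convolution-invertibility.

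The more delicate step is the derivation of (3.1), (3.2). These do \emph{not} come from the coquasitriangular axioms applied to single arrows (all such applications collapse to $0=0$ by the degree argument above), but from the Majid bimodule axioms (2.5)--(2.7) on $kQ_1$. Combining these axioms with the left/right action relation derived above and expanding the $\R$-factors via (3.3), (3.4), one obtains purely $\Phi$-theoretic identities with one variable fixed at $t \in G$ satisfying $R_t \ne 0$; after using the 3-cocycle condition to clear an auxiliary vertex variable, these identities take exactly the forms (3.1), (3.2).

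For the backward direction, given $(\Phi, \R)$ satisfying (3.1)--(3.5), I would construct the Majid bimodule structure on $kQ_1$ by fixing a basis of arrows and specifying the left action with a $\Phi$-dependent scalar satisfying (2.5), then taking the right action forced by (2.3), namely $\alpha g = (\R(g, tx)/\R(g, x))\, g\alpha$ for $\alpha : x \to tx$. Conditions (3.1), (3.2) are precisely what is needed for (2.5)--(2.7) to hold, yielding a graded Majid algebra on $kQ$ via the quiver correspondence of \cite{qha1}. Extending $\R$ to $kQ \otimes kQ$ by zero on positive-degree summands, one verifies (2.1)--(2.3) case by case: the group-element case reduces to (3.3), (3.4); the mixed group-arrow cases reduce to the defining relation for the right action; higher-degree cases vanish identically on both sides by the degree argument. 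That the two procedures are mutually inverse on $(\Phi, \R)$ is built into the construction, giving the bijection. The principal obstacle is the explicit derivation of (3.1), (3.2), which involves sustained manipulation of 3-cocycle identities and multi-factor products in $\Phi$ and $\R$; the rest is routine unpacking on low-degree path elements.
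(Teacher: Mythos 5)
Your overall architecture matches the paper's: restrict to degree $0$ to get the $3$-cocycle and conditions (3.3)--(3.4); extract (3.5) from axiom (2.3) applied to group--arrow pairs; obtain (3.1)--(3.2) from the quasi-bimodule axioms (2.5)--(2.7) after converting right actions into left actions via the commutation relation; and reverse the process using the explicit bimodule on $kQ_1$. Your route to (3.5) is in fact a little cleaner than the paper's: you equate the two expressions for $\alpha g$ obtained from the two orderings of the pair in (2.3), whereas the paper compares $\beta\alpha$ with $\alpha\beta$ for two arrows via the shuffle product and then propagates by multiplicativity and connectedness; both work.

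The genuine gap is in the converse direction, in your claim that for axiom (2.3) the ``higher-degree cases vanish identically on both sides by the degree argument.'' That is true for (2.1) and (2.2), which are scalar identities whose every term acquires a factor of $\Phi$ or $\R$ evaluated on a positive-degree component, but it is false for (2.3), which is an identity in $kQ$ rather than in $k$. For paths $\alpha=a_m\cdots a_1$ and $\beta=b_n\cdots b_1$ with $m+n\ge 2$, the surviving terms of (2.3) are exactly $\beta\alpha\,\R(s(a_1),s(b_1))$ on the left and $\R(t(a_m),t(b_n))\,\alpha\beta$ on the right, so one must actually prove the quasi-commutativity $\beta\alpha=\frac{\R(t(a_m),t(b_n))}{\R(s(a_1),s(b_1))}\,\alpha\beta$ for arbitrary paths. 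This is the most substantial computation in the paper's proof of the converse: one expands $\beta\alpha$ by the quantum shuffle formula (2.8), applies the length-one commutation relation inside each bracket $[(d\beta)_i(\bar{d}\alpha)_i]$, observes that the product of the resulting ratios telescopes to $\R(t(a_m),t(b_n))/\R(s(a_1),s(b_1))$ because $t((d\beta)_i)=s((d\beta)_{i+1})$ and $t((\bar{d}\alpha)_i)=s((\bar{d}\alpha)_{i+1})$, and finally uses the bijection $d\mapsto\bar{d}$ between $D_n^{m+n}$ and $D_m^{m+n}$ to identify the remaining sum with the expansion of $\alpha\beta$. Without this step the constructed $(kQ,\Phi,\R)$ has not been shown to satisfy (2.3), so the bijection asserted in the theorem is not established.
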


\begin{proof}
Assume that $(kQ, \Phi, \R)$ is a graded coquasitriangular Majid
algebra with $\Phi$ and $\R$ concentrating at degree 0 and $Q_0
\cong G.$ Then the restriction to degree 0 part, namely
$(kG,\Phi,\R),$ is again coquasitriangular. By definition, it is
clear that $\Phi$ is a 3-cocycle on $G$ and by (2.1)-(2.2) $\R$
satisfies
\begin{gather*}
\R(f , gh)=\R(f , g)\R(f , h)\frac{\Phi(g,f,h)}{\Phi(g,h,f)\Phi(f,g,h)}, \\
\R(fg, h)=\R(f , h)\R(g ,
h)\frac{\Phi(h,f,g)\Phi(f,g,h)}{\Phi(f,h,g)}
\end{gather*}
for all $f,g,h \in G.$ Next we verify (3.5). Choose any $g,h \in G$
with $R_gR_h \ne 0.$ Then in $Q$ there are arrows starting from the
unit $\unit$ of $G,$ say $\alpha: \unit \To g$ and $\beta: \unit \To
h.$ Then by (2.3) we have \[ \beta g \R(g,\unit) = \R(g,h) g \beta,
\quad \alpha h \R(h,\unit) = h \alpha \R(h,g). \] Here we have used
the fact that $\Phi$ and $\R$ concentrate at degree 0. By
(2.1)-(2.2) it is easy to deduce that $\R(g,\unit)=1=\R(\unit ,h)$
for any $g,h \in G.$ Hence we have
\[ \beta g = \R(g,h) g \beta, \quad \alpha h = h \alpha \R(h,g). \]
Now together with (2.3) and (2.8) we have
\begin{eqnarray*}
\beta \alpha &=& [\beta g][\alpha] + [h \alpha][\beta] = \R(g
, h)[g \beta][\alpha] + [h \alpha][\beta] \\
&=& \R(g , h)\alpha \beta = \R(g , h)[\alpha h][\beta] + \R(g , h)[g \beta][\alpha] \\
&=& \R(g , h)\R(h , g)[h \alpha][\beta] + \R(g , h)[g \beta][\alpha]. \\
\end{eqnarray*}
It follows that $\R(g , h)\R(h , g)=1.$ For any $f,g,h \in G$ with
$R_fR_gR_h \ne 0,$ we have
\begin{eqnarray*}
&&\R(f,gh)\R(gh,f) \\ &=&\R(f , g)\R(f ,
h)\frac{\Phi(g,f,h)}{\Phi(g,h,f)\Phi(f,g,h)}\R(g,f)\R(h,f)\frac{\Phi(f,g,h)\Phi(g,h,f)}{\Phi(g,f,h)}
\\ &=&1.
\end{eqnarray*}
As the Hopf quiver $Q$ is connected, all such $f,g,h$ run through a
generating set of $G,$ so (3.5) follows. Finally we prove
(3.1)-(3.2). If $R_t \ne 0,$ then in $Q$ there is an arrow $\alpha:
\unit \To t.$ For any $e,f,g \in G,$ by the definition of Majid
algebra (see e.g. \cite{qha1}) we have
\begin{gather*}
e(f(g\alpha))=\frac{\Phi(e,f,gt)}{\Phi(e,f,g)}(ef)(g\alpha),\\
((g\alpha)e)f=\frac{\Phi(g,e,f)}{\Phi(gt,e,f)}(g\alpha)(ef),\\
(e(g\alpha))f=\frac{\Phi(e,g,f)}{\Phi(e,gt,f)}e((g\alpha)f).
\end{gather*}
Since $\R$ is a coquasitriangular structure, by the first equation
and (2.3) all the terms of the last two equations can be written as
some scalars times $(efg)\alpha.$ By comparison of the scalars, one
has (3.1) and (3.2).

Conversely, we assume that $(\Phi,\R)$ is a pair satisfying
(3.1)-(3.5). Let $M$ be the $k$-space spanned by the set $\{ g\alpha
 | g \in G, \alpha \in Q_1 \ with \ s(\alpha)=\unit\}.$ Set $\dl
(g\alpha)=gt(\alpha) \otimes g\alpha $ and $\dr(g\alpha) =g\alpha
\otimes g.$ Then it is direct to verify that $(M,\dl,\dr)$ is a
$kG$-bicomodule and is isomorphic to $kQ_1.$ For each $f \in G,$
define \begin{equation} f(g\alpha) = \Phi(f,g,t(\alpha)) (fg)\alpha,
\, \,
(g\alpha)f=\frac{\R(f,gt(\alpha))}{\R(f,g)}\Phi(f,g,t(\alpha))(fg)\alpha.
\end{equation} We claim that this defines $(kG,\Phi)$-Majid bimodule
on $M,$ that is, (2.5)-(2.7) hold and the quasi-bimodule structure
is compatible with the bicomodule structure. By definition (3.6), we
have
\begin{gather*}
e(f(g\alpha))=\Phi(f,g,t(\alpha))e((fg)\alpha)=\Phi(f,g,t(\alpha))\Phi(e,fg,t(\alpha))(efg)\alpha,\\
\frac{\Phi(e,f,gt(\alpha))}{\Phi(e,f,g)}(ef)(g\alpha)=\frac{\Phi(e,f,gt(\alpha))}{\Phi(e,f,g)}
\Phi(ef,g,t(\alpha))(efg)\alpha.
\end{gather*}
Since $\Phi$ is a 3-cocycle, it follows that \[ e(f(g\alpha)) =
\frac{\Phi(e,f,gt(\alpha))}{\Phi(e,f,g)}(ef)(g\alpha). \] This is
(2.5). Similarly, by direct calculation one can show that (3.1) and
(3.2) imply respectively (2.6) and (2.7). It is clear that the
quasi-bimodule structure maps are bicomodule morphisms. Now by
\cite{qha1} (Proposition 3.3), the $(kG,\Phi)$-Majid bimodule
structure on $M$ can provide a graded Majid algebra structure on
$kQ$ where the associator is the trivial extension of $\Phi.$ That
is, set $\Phi(x,y,z)=0$ whenever one of $x,y,z$ lies out of $kQ_0.$
The map $\R$ is extended trivially in a similar manner. We claim
that $(kQ,\Phi,\R)$ is coquasitriangular. Since $\Phi$ and $\R$
concentrate at degree 0, the axioms (2.1)-(2.2) are direct
consequence of the conditions (3.3)-(3.4). It remains to verify
(2.3). We need to show that the following equation
\[ \beta \alpha  {\R}(s(a_1) , s(b_1))= \R(t(a_m) , t(b_n)) \alpha \beta \]
holds for all paths $\alpha=a_m \cdots a_1,\ \beta=b_n \cdots b_1.$
Here we use the convention: if $m=0,$ then $\alpha \in Q_0$ and
$t(\alpha)=\alpha=s(\alpha).$ When $l(\alpha) + l(\beta) \le 1,$ the
equation is obvious. Now let $\alpha=a_m \cdots a_1,\ \beta=b_n
\cdots b_1$ with $m+n>1.$ Then we have by the preceding cases and
the product formula (2.8) that
\begin{eqnarray*}
 \beta \alpha &=& \sum_{d \in D_n^{m+n}} [(d\beta)_{m+n} (\bar{d}\alpha)_{m+n}]
 \cdots [(d\beta)_1 (\bar{d}\alpha)_1] \\
 &=& \sum_{d \in D_n^{m+n}} [\frac{\R(t((\bar{d}\alpha)_{m+n}) ,
 t((d\beta)_{m+n}))}{\R(s((\bar{d}\alpha)_{m+n}) ,
 s((d\beta)_{m+n}))} (\bar{d}\alpha)_{m+n} (d\beta)_{m+n}] \cdots \\
 & & [\frac{\R(t((\bar{d}\alpha)_1) , t((d\beta)_1))}{R(s((\bar{d}\alpha)_1)
 , s((d\beta)_1))} (\bar{d}\alpha)_1 (d\beta)_1] \\
 &=& \frac{\R(t(a_m) , t(b_n))}{\R(s(a_1) , s(b_1))} \sum_{d \in D_n^{m+n}}
 [(\bar{d}\alpha)_{m+n} (d\beta)_{m+n}] \cdots [(\bar{d}\alpha)_1 (d\beta)_1] \\
 &=& \frac{\R(t(a_m) , t(b_n))}{\R(s(a_1) , s(b_1))} \sum_{d \in D_m^{m+n}}
 [(d\alpha)_{m+n} (\bar{d}\beta)_{m+n}] \cdots [(d\alpha)_1 (\bar{d}\beta)_1] \\
 &=& \frac{\R(t(a_m) , t(b_n))}{\R(s(a_1) , s(b_1))} \alpha \beta.
\end{eqnarray*} This is exactly the desired equation. Note that in the third equality we have used the fact
$t((d\beta)_i)=s((d\beta)_{i+1})$ for $i=1, \cdots, m+n-1.$ Now we
are done.
\end{proof}

\subsection{}
We conclude this section by some remarks.

\begin{remarks} Keep the assumptions and notations of Subsection
3.2.
\begin{enumerate}
  \item The coquasitriangular structure $\R$ is sort of a ``quasi" skew-symmetric bicharacter of the group $G.$
  Clearly, if $\Phi$ is trivial, then $\R$ is a usual skew-symmetric bicharacter. This is the usual Hopf case as
  given by Theorem 3.3 in \cite{hsaq5}. More generally, if $\Phi$ is a coboundary, then by a
suitable twisting, we can also go back to the Hopf case.
  \item The coquasitriangular structures constructed in the previous
theorem are actually cotriangular by (3.5). By Lemma 2.1, all
possible (not necessarily graded and concentrating at degree 0)
  coquasitriangular Majid algebra structures on Hopf quivers degenerate to
  cotriangular ones. This reduces the classification problem of
  general coquasitriangular Majid structures on Hopf quivers to a lifting procedure of the cotriangular ones.
\end{enumerate}
\end{remarks}

\section{Coquasitriangular Pointed Majid Algebras}

The aim of this section is to provide a quiver setting for general
coquasitriangular pointed Majid algebras. Some examples and
classification results are also provided via the quiver setting.

\subsection{}
The following is our main result which enables us to construct
coradically graded coquasitriangular pointed Majid algebras
exhaustively on Hopf quivers. This is a quasi analogue of Theorem
4.2 in \cite{hsaq5} and the proof is given by adjusting the argument
there into our situation.

\begin{theorem}
Let $(H,\Phi,\R)$ be a coquasitriangular pointed Majid algebra, and
as in Subsection 2.3 let $(\gr(H),\gr(\Phi),\gr(\R))$ denote its
graded version. Then there exist a unique Hopf quiver $Q=Q(G,R)$
with $G$ abelian and a graded coquasitriangular Majid algebra
structure $(kQ,\Psi,\mathfrak{R})$ with $\Psi$ and $\mathfrak{R}$
concentrating at degree 0 such that $(\gr(H),\gr(\Phi),\gr(\R))$ is
isomorphic to a large sub structure of $(kQ,\Psi,\mathfrak{R}).$
\end{theorem}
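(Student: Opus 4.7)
The plan is to adapt the argument of Theorem 4.2 of \cite{hsaq5} to the quasi setting. By Lemma 2.1 the coradically graded object $(\gr(H), \gr(\Phi), \gr(\R))$ is already coquasitriangular, and by the Gabriel-type theorem \cite{qha1} its underlying graded Majid algebra embeds as a large sub structure in the path coalgebra of a unique Hopf quiver. The task is to promote this to an embedding of coquasitriangular Majid algebras whose target is a graded structure produced by Theorem 3.2 from data already visible in $\gr(H)$.

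First I would extract the relevant data. Set $G := G(H)$, so that $H_0 = kG$ is a coquasitriangular sub Majid algebra of $\gr(H)$ with induced 3-cocycle $\Phi_0 := \gr(\Phi)|_{G^{\times 3}}$ and map $\R_0 := \gr(\R)|_{G^{\times 2}}$. Applying relation (2.3) of $\gr(\R)$ to group-like elements, as in the proof of Proposition 3.1, shows that $G$ is abelian. The ramification datum $R = \sum_g R_g \, g$ is then read off the $kG$-bicomodule $\gr(H)_1$, with $R_g$ equal to the multiplicity of the $(g,\unit)$-isotypic component; this produces the Hopf quiver $Q = Q(G,R)$ of the statement.

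The next step is to verify that $(\Phi_0, \R_0)$ satisfies conditions (3.1)--(3.5). The equations (3.3)--(3.5) are merely the axioms (2.1)--(2.3) restricted to $kG$. For (3.1)--(3.2), pick any $t$ with $R_t \ne 0$ together with an arrow $\alpha:\unit\To t$ in $\gr(H)_1$; inserting the elements $g\alpha$ into the Majid bimodule axioms (2.5)--(2.7) and combining with (2.3) yields scalar identities relating $\Phi_0$ at various tuples involving $t$, and these are exactly (3.1)--(3.2). This is the same calculation as in the necessity direction of the proof of Theorem 3.2. Theorem 3.2 then hands back a graded coquasitriangular Majid algebra structure $(kQ, \Psi, \mathfrak{R})$ on $Q$ with $\Psi, \mathfrak{R}$ concentrated in degree zero and restricting to $\Phi_0, \R_0$. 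If $Q$ is disconnected, one combines this with the Cartier--Gabriel decomposition already invoked in Subsection 3.2.

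The main obstacle is the last step: construction of an embedding $\iota : \gr(H) \hookrightarrow kQ$ of coquasitriangular Majid algebras. On the coradical $\iota$ is the identity $kG \to kQ_0$, and on $\gr(H)_1$ it is the inclusion of $kG$-bicomodules into $kQ_1$ provided by the Gabriel-type theorem. The key observation is that axiom (2.3) applied to an arrow $\alpha$ and a group-like $f$ in $\gr(H)$ pins down the right action as $\alpha f = \R_0(f, t(\alpha)) \, f\alpha$; together with the bimodule axioms (2.5)--(2.7) this forces the $(kG,\Phi_0)$-Majid bimodule structure on $\gr(H)_1$ to coincide with the one on $kQ_1$ prescribed by (3.6). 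Hence $\iota$ is a morphism of Majid bimodules in degree 1, and the quantum-shuffle product formula (2.8) propagates multiplicativity to arbitrary path length. Compatibility of $\iota$ with the coquasitriangular structures is immediate since both $\gr(\R)$ and $\mathfrak{R}$ vanish off the coradical and agree on $G \times G$. Uniqueness of $Q$ is inherited from the Gabriel-type theorem.
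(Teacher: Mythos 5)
Your proposal is correct and follows essentially the same route as the paper's proof: restrict to the coradical to obtain $(kG,\Phi_0,\R_0)$ with $G$ abelian, invoke the Gabriel-type theorem to get the unique quiver $Q(G,R)$ from the bimodule $H_1/H_0$, extend $\R$ trivially to $\mathfrak{R}$, and re-run the verification from Theorem 3.2 to see that the embedding respects the coquasitriangular structures. One small inaccuracy worth fixing: condition (3.5) is \emph{not} obtained by merely restricting (2.1)--(2.3) to $kG$ (that restriction only yields commutativity of $G$); the instances $\R_0(f,t)\R_0(t,f)=1$ that you actually need, for $f\in G$ and $R_t\ne 0$, come from applying (2.3) to an arrow $\unit \To t$ paired with a group-like $f$ in both orders --- the same degree-one computation you already invoke for (3.1)--(3.2) --- and this also sidesteps the connectedness hypothesis of Theorem 3.2 that full (3.5) would otherwise require.
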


\begin{proof}
 Let $G$ denote the set of group-like elements of $H.$ Then the
coradical $H_0$ of $H$ is the group algebra $kG.$ By restricting the
associator $\Phi$ and the coquasitriangular structure $\R,$ one has
a sub coquasitriangular Majid algebra $(kG,\Phi,\R).$ As Proposition
3.1, we have immediately that $G$ is an abelian group and $\Phi$ is
a 3-cocycle on $G.$ By the Gabriel type theorem for pointed Majid
algebras \cite{qha1}, there exists a unique Hopf quiver $Q=Q(G,R)$
such that $(\gr(H),\gr(\Phi))$ can be viewed as a large sub Majid
algebra of the graded Majid structure $(kQ,\Psi)$ determined by the
$(kG,\Phi)$-Majid bimodule $H_1/H_0.$ Note that $\Psi$ is actually
the trivial extension of the 3-cocycle $\Phi$ on $G.$ Let
$\mathfrak{R}$ be the trivial extension of $\R:G \times G \To k.$ By
the same argument as in the proof of Theorem 3.2, one can show that
$(kQ,\Psi,\mathfrak{R})$ is a graded coquasitriangular Majid algebra
and the embedding $(\gr(H),\gr(\Phi)) \hookrightarrow (kQ,\Psi)$
respects the coquasitriangular structures. This completes the proof.
\end{proof}

\subsection{}
By the quasi analogue of the Cartier-Gabriel decomposition theorem
for pointed Majid algebras \cite{qha1}, we can focus on the
connected ones (that is, those Majid algebras whose quivers are
connected) without loss of generality. In that case, we can say more
about their graded version.

\begin{corollary}
Suppose that $(H,\Phi,\R)$ is a connected coquasitriangular pointed
Majid algebra. Then its graded version $(\gr(H),\gr(\Phi),\gr(\R))$
is cotriangular.
\end{corollary}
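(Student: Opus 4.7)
My approach is to combine Theorem 4.1 with the quiver classification in Theorem 3.2. Applying Theorem 4.1 to $(H, \Phi, \R)$ yields an embedding of $(\gr(H), \gr(\Phi), \gr(\R))$ as a large sub structure of a graded coquasitriangular Majid algebra $(kQ, \Psi, \mathfrak{R})$ on a unique Hopf quiver $Q = Q(G, R)$, with $G$ abelian and $\Psi, \mathfrak{R}$ concentrated at degree $0$. The hypothesis that $H$ is connected forces $Q$ to be connected, so Theorem 3.2 applies and requires $\mathfrak{R}$ to satisfy condition (3.5), namely $\mathfrak{R}(g, h)\mathfrak{R}(h, g) = 1$ for all $g, h \in G$. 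Since the embedding respects coquasitriangular structures, the same identity transfers to $\gr(\R)$ on the group-like elements of $\gr(H)$.

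It remains to promote this to the full cotriangularity identity (2.4) on all of $\gr(H)$. By bilinearity it suffices to check (2.4) on homogeneous $a, b$. If both lie in $H_0 = kG$, then the coproduct in $kG$ is diagonal on group-like elements and (2.4) reduces, after Sweedler expansion, to the identity $\mathfrak{R}(g, h)\mathfrak{R}(h, g) = 1$ already established, combined with $\e \equiv 1$ on $G$. If at least one of $a, b$ has positive degree, then because $\gr(H)$ is coradically graded every Sweedler component $a_1 \otimes a_2$ carries a factor of positive degree; by the definition of $\gr(\R)$ in Subsection 2.3 this factor annihilates one of the two scalars appearing on the left-hand side of (2.4), while $\e$ on $\gr(H)$ vanishes on every positive-degree homogeneous element, so the right-hand side is zero as well.

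I do not foresee a substantial obstacle: Theorems 4.1 and 3.2 have already absorbed the conceptual work, leaving only a short degree-bookkeeping step in the second paragraph. The one thing to be careful about is confirming that the coproduct on $\gr(H)$ is homogeneous and that the counit annihilates positive-degree components, both of which are standard features of a coradically graded coalgebra and match the way $\gr(\R)$ has been defined.
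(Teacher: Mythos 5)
Your proposal is correct and follows essentially the same route as the paper, which simply cites Theorem 4.1 together with Remarks 3.3(2) (the observation that condition (3.5) of Theorem 3.2 makes the graded structures on connected Hopf quivers cotriangular). Your second paragraph merely spells out the degree-bookkeeping that the paper leaves implicit, and it is sound since $\gr(\R)$ and $\e$ both vanish on positive-degree homogeneous components.
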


The proof is clear by Remarks 3.3 (2) and Theorem 4.1. More
generally, the graded version of a non-connected coquasitriangular
pointed Majid algebra can be written as the crossed product of a
cotriangular one (namely, its connected component containing the
identity) and a group algebra twisted by a 3-cocycle.

Recall that a tensor category is called pointed if its simple
objects are invertible. See \cite{eo} for more definitions and
results on finite tensor categories used below. We remark that the
preceding result also implies an interesting consequence for braided
pointed finite tensor categories with integral Frobenius-Perron
dimensions of objects. It is well-known that such tensor categories
indeed correspond to the corepresentation categories of
finite-dimensional coquasitriangular pointed Majid algebras. Thus
Corollary 4.2 implies for any braided pointed finite tensor category
$\mathcal{C}$ with integral Frobenius-Perron dimensions of objects,
its connected component ($\mathcal{C}$ is essentially governed by
its connected component, see \cite{qha3} for details) containing the
unit object is tensor equivalent to a deformation of a connected
\emph{symmetric} pointed finite tensor category.

\subsection{} For simplicity, \emph{we assume that the ground field $k$
is algebraically closed of characteristic 0 in the rest of the
paper.} As an example, let us consider the case of connected
coquasitriangular pointed Majid algebras over the cyclic group
$\Z_n=<g>$ of order $n>1.$ And we will see the condition
``coquasitriangular" is strong enough to make such pointed Majid
algebras to be twisting equivalent to Hopf algebras.

 First we recall a list
of 3-cocycles on $\Z_n$ as given in \cite{g}. Let $q$ be a primitive
root of unity of order $n.$ For any integer $i \in \N,$ we denote by
$i'$ the remainder of division of $i$ by $n.$ A list of 3-cocycles
on $\Z_n$ are
\begin{equation}
\Phi_s(g^{i},g^{j},g^{k})=q^{si(j+k-(j+k)')/n}
\end{equation}
for all $0 \le s \le n-1$ and $0 \le i,j,k \le n-1.$ Obviously,
$\Phi_s$ is trivial (i.e., cohomologous to a 3-coboundary) if and
only if $s=0.$

Let $R$ be a ramification datum of $\Z_n$ and let $Q$ denote the
associated Hopf quiver  $Q(\Z_n,R).$ Assume that $Q$ is connected.
By Theorem 3.2, the set of graded coquasitriangular Majid algebras
on $kQ$ with associator and coquasitriangular structure
concentrating at degree 0 is equivalent to the set of pairs
$(\Psi,\mathfrak{R})$ satisfying (3.1)-(3.5). Take $\Psi=\Phi_s$ for
some $0 \le s \le n-1$ as given in (4.1). By (3.5), we have
$\mathfrak{R}(g,g)^2=1.$ Using induction and (3.3), one has
\begin{equation}
1 = \mathfrak{R}(g,g^n) = \mathfrak{R}(g,g)^nq^{-s}. \end{equation}
We claim that this indeed implies that $s=0$. In fact, by
$\mathfrak{R}(g,g)^2=1$ we know that $\mathfrak{R}(g,g)^{n}=1$ or
$\mathfrak{R}(g,g)^{n}=-1$. By (4.2), the first case implies that
$q^{-s}=1$ and $s=0$. If $\mathfrak{R}(g,g)^{n}=-1$, then $n$ must
be odd. Also, (4.2) shows that $q^{-s}=-1$. Note that $q$ is an
$n$-th primitive root of unity and so $1=(q^{-s})^{n}=(-1)^{n}=-1$.
This is absurd. Thus we always have $\mathfrak{R}(g,g)^{n}=1$. This
claim means, $\Psi$ can be chosen only as a 3-coboundary and thus
such graded coquasitriangular Majid algebras must be twisting
equivalent to cotriangular Hopf algebras by Remarks 3.3.

Now, together with Corollary 4.2, the following assertion is clear.

\begin{proposition}
Assume that $(H,\Phi,\R)$ is a connected coquasitriangular pointed
Majid algebra with the set of group-likes equal to $\Z_n.$ Then its
graded version $(\gr(H),\gr(\Phi),\gr(\R))$ is twisting equivalent
to a cotriangular Hopf algebra.
\end{proposition}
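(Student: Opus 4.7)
The plan is to apply the quiver realization of Theorem 4.1 to $\gr(H)$ and then exploit the explicit form of $3$-cocycles on $\Z_n$ to squeeze out the cohomology class of $\Psi$. First, Theorem 4.1 produces a unique Hopf quiver $Q=Q(\Z_n,R)$ and a graded coquasitriangular structure $(kQ,\Psi,\mathfrak{R})$, with $\Psi$ and $\mathfrak{R}$ concentrated in degree $0$, into which $(\gr(H),\gr(\Phi),\gr(\R))$ embeds as a large sub structure; moreover $\Psi$ is the trivial extension of $\gr(\Phi)|_{k\Z_n}$. Since any $2$-cochain on $\Z_n$ that trivializes $\Psi$ in degree $0$ extends to a twist of the whole graded Majid algebra, it suffices to show that $\Psi|_{\Z_n\times\Z_n\times\Z_n}$ is a $3$-coboundary.

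Second, the proposal is to put $\Psi$ in the normal form $\Phi_s$ of (4.1) for some $0\le s\le n-1$, which is harmless up to a $2$-cochain twist. Connectedness of $Q$ forces $R_{g^a}\ne 0$ for some generator $g^a$ of $\Z_n$, and composing with a group automorphism I may as well arrange $R_g\ne 0$. Then all axioms (3.3)--(3.5) of Theorem 3.2 are available with the relevant arguments in the cyclic group.

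Third, the key step, which I expect to be the main obstacle, is to extract a usable equation relating $s$ and $\mathfrak{R}(g,g)$. Iterating (3.3) in the second slot and tracking the accumulated factors $\Phi_s(g,g^i,g^j)$ should yield
\begin{equation*}
1=\mathfrak{R}(g,g^n)=\mathfrak{R}(g,g)^n\,q^{-s},
\end{equation*}
where the ``$1$'' on the left comes from $g^n=\unit$ and $\mathfrak{R}(g,\unit)=1$. Equation (3.5) with $f=g=h$ gives $\mathfrak{R}(g,g)^2=1$. A case analysis then finishes: if $\mathfrak{R}(g,g)^n=1$ we immediately get $q^{-s}=1$ and thus $s=0$; if $\mathfrak{R}(g,g)^n=-1$ then $n$ is odd and $q^{-s}=-1$, but raising to the $n$-th power yields $1=(q^{-s})^n=(-1)^n=-1$, a contradiction. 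So only $s=0$ survives. The delicate bookkeeping is precisely in the induction that produces the factor $q^{-s}$; I expect this to require careful use of the cocycle identity for $\Phi_s$ to compensate for the non-associativity.

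Finally, with $\Psi$ cohomologous to the trivial $3$-cocycle, Remarks 3.3(1) shows that $(kQ,\Psi,\mathfrak{R})$ is, after a suitable $2$-cochain twist, an ordinary cotriangular Hopf algebra (the cotriangularity is already guaranteed by Corollary 4.2, or directly by (3.5)). Restricting this twist to the large sub structure $(\gr(H),\gr(\Phi),\gr(\R))\hookrightarrow(kQ,\Psi,\mathfrak{R})$ gives the desired twisting equivalence to a cotriangular Hopf algebra.
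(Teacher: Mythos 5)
Your proposal is correct and follows essentially the same route as the paper: pass to the quiver realization of Theorem 4.1, normalize the associator to Gelaki's cocycle $\Phi_s$, combine $\mathfrak{R}(g,g)^2=1$ from (3.5) with the identity $1=\mathfrak{R}(g,g^n)=\mathfrak{R}(g,g)^nq^{-s}$ obtained by iterating (3.3), and run the same parity case analysis to force $s=0$. The paper's own argument is exactly this computation (displayed as (4.2)) followed by an appeal to Remarks 3.3 and Corollary 4.2, so your additional bookkeeping about extending the degree-zero twist and restricting to the large sub structure is a harmless elaboration rather than a divergence.
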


As direct consequence, the  pointed Majid algebras $M_+(8), M_-(8)$
and $M(32)$ over $\Z_2$ given in \cite{qha2}, and $M(n,s,q)$ with $s
\ne 0$ over $\Z_n$ given in \cite{qha3} are not coquasitriangular
since they are nontrivial graded pointed Majid algebras, that is,
pointed Majid algebras which are not twisting equivalent to Hopf
algebras. Note that finite-dimensional connected graded cotriangular
pointed Hopf algebras over $\Z_n$ is completely classified by
Corollary 6.3 of \cite{hsaq5}. Therefore, finite-dimensional
connected graded cotriangular Majid algebras over $\Z_n$ are
essentially known by the previous proposition.

Of course, Proposition 4.3 also implies the corresponding
consequence on connected braided pointed finite tensor categories
whose invertible objects consisting of the cyclic group $\Z_n.$ In
particular, together with \cite{qha3,hsaq5} in a fairly
straightforward way, we get a classification result for braided
pointed tensor categories of finite type, i.e., in which there are
only finitely many indecomposable objects.

\begin{corollary}
Any connected braided pointed tensor category of finite type whose
simple objects all have Frobenius-Perron dimension 1 is tensor
equivalent to a deformation of $\Corep H$ where $H$ is a generalized
Taft algebra which can be presented by generators $g$ and $x$ with
relations \[ g^n=1, \quad x^2=0, \quad gx=-xg. \] Here $n$ is an
even integer and $\Corep H$ denotes the comodule category of $H.$
\end{corollary}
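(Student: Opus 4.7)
My plan is to assemble three ingredients: the correspondence between braided pointed tensor categories and coquasitriangular pointed Majid algebras; Proposition 4.3 above; and the classification of connected graded cotriangular pointed Hopf algebras over $\Z_n$ from Corollary 6.3 of \cite{hsaq5}, combined with the finite-type classification results of \cite{qha3}.

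First I would translate the tensor-categorical hypothesis into a Majid-algebraic one. A connected braided pointed tensor category whose simple objects all have Frobenius–Perron dimension $1$ is tensor equivalent to $\Corep H$ for some connected coquasitriangular pointed Majid algebra $H$; the finite-type hypothesis translates into $H$ having finite corepresentation type. Invoking the classification of finite-type pointed Majid algebras in \cite{qha3}, the group of group-likes $G=G(H)$ must be a finite cyclic group, say $\Z_n$, since any non-cyclic abelian $G$ (cf.\ Proposition 3.1) would force the associated Hopf quiver $Q(G,R)$ to be sufficiently large to admit infinitely many indecomposable comodules.

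Next I would apply Proposition 4.3 to conclude that $(\gr(H),\gr(\Phi),\gr(\R))$ is twisting equivalent to a connected cotriangular pointed Hopf algebra $H'$ over $\Z_n$. Corollary 6.3 of \cite{hsaq5} then classifies finite-dimensional connected graded cotriangular pointed Hopf algebras over $\Z_n$ as precisely the generalized Taft algebras presented by $g^n=1$, $x^2=0$, $gx=-xg$. The relation $gx=-xg$ arises because cotriangularity forces $\R(g,g)^2=1$, and the existence of a non-trivial primitive arrow in the connected Hopf quiver excludes the commutative case $\R(g,g)=1$, leaving $\R(g,g)=-1$; this choice is compatible with $g^n=1$ only when $n$ is even, and the quantum-shuffle multiplication formula of Subsection 2.4 then yields $x^2=0$ automatically.

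Finally, passing from $\gr(H)$ back to $H$ is a lifting (in the sense of Remarks 3.3(2)), which at the categorical level realizes $\Corep H$ as a deformation of $\Corep \gr(H)\simeq \Corep H'$, giving the stated conclusion. The main obstacle is the translation step: rigorously linking the categorical finite-type hypothesis to the cyclic group restriction required by Proposition 4.3, and verifying that \emph{deformation} in the tensor-categorical sense matches \emph{lifting} in the quiver framework. Once $G=\Z_n$ is secured, the remainder is a straightforward assembly of Proposition 4.3 together with the classification from \cite{hsaq5}.
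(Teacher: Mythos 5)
Your proposal is correct and follows essentially the same route as the paper: realize the category as $\Corep H$ for a connected coquasitriangular pointed Majid algebra $H$ of finite corepresentation type, use the classification in \cite{qha3} to reduce the group of group-likes to $\Z_n$, apply Proposition 4.3 to pass to a connected cotriangular pointed Hopf algebra, and finish by citing Corollary 6.3 of \cite{hsaq5}. One minor quibble: it is the finite-type hypothesis, not merely the existence of a nontrivial arrow in the connected quiver, that excludes $\R(g,g)=1$ (that case produces an infinite-dimensional commutative Hopf algebra), but this detail is internal to the cited Corollary 6.3 of \cite{hsaq5}, which both you and the paper simply invoke.
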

\begin{proof} Let $\mathcal{C}$ be a connected braided pointed
tensor category of finite type whose simple objects all have
Frobenius-Perron dimension 1. Thus we know that there is a connected
coquasitriangular pointed Majid algebra $H$ of finite
corepresentation type such that $\Corep H=\mathcal{C}$ (see, for
example, Subsection 4.2 in \cite{qha3}). All connected pointed Majid
algebras of finite corepresentation type have been classified in
\cite{qha3} and they are shown to be pointed Majid algebras over
$\mathbb{Z}_{n}$ for some $n\in \mathbb{N}$. Therefore, by
Proposition 4.3 one can assume that $\gr H$ is a connected
cotriangular pointed Hopf algebra of finite corepresentation type.
It is known that a connected graded pointed algebra of finite
corepresentation type is indeed a generalized Taft algebra (see
\cite{GLL}). Corollary 6.3 of \cite{hsaq5} shows that this algebra
must be of the form as given in this corollary.
\end{proof}

By quiver representation theory, such braided tensor categories are
well understood. In particular, their Auslander-Reiten quivers are
truncated tubes of height 2, see for instance \cite{ass}.

Finally, we remark that the knowledge of connected coquasitriangular
pointed Majid algebras over $\Z_n$ also sheds some light on the
general ones over finite abelian groups. It is clear that a general
Hopf quiver $Q(G,R)$ with $G$ abelian is consisting of various sub
quivers of form $Q(\Z_n, r).$ Therefore at least the local structure
of a general coquasitriangular Majid algebra is known. The remaining
task is the gluing of these local structures.

\section{Summary}

A quiver setting for coquasitriangular pointed Majid algebras is
built. It shows that the coquasitriangularity can be described by
some combinatorial properties of Hopf quivers. The quiver approaches
provide practical way to construct bundles of coquasitriangular
Majid algebras and braided tensor categories.

So far we have only dealt with the coradically graded case. In order
to extend our work to the non-graded situation, a proper deformation
theory of pointed Majid algebras is very much desirable. This task
seems more complicated than in the Hopf case, as the associator gets
involved.

\vskip 0.5cm

\noindent{\bf Acknowledgements:} The research was supported by the
NSFC grants (10601052, 10801069) and the SDNSF grants (YZ2008A05,
ZR2009AM012). The authors are grateful to the DAAD for financial
support which enabled them to visit the University of Cologne. They
would also like to thank their host Professor Steffen K\"{o}nig for
his kind hospitality. The second author is supported by the Japan
Society for the Promotion of Science under the item ``JSPS
Postdoctoral Fellowship for Foreign Researchers" and he thanks
Professor Akira Masuoka for stimulating discussions.

\end{document}